\numberwithin{equation}{section}
\theoremstyle{plain}
\newtheorem{thm}{Theorem}[section]
\newtheorem{lem}[thm]{Lemma}
\theoremstyle{definition}
\newtheorem{rem}[thm]{Remark}
\newcommand{\R}{\mathbb{R}}
\newcommand{\Z}{\mathbb{Z}}
\newcommand{\calF}{\mathcal{F}}
\newcommand{\calM}{\mathcal{M}}
\newcommand{\calS}{\mathcal{S}}
\begin{document}

\title[A counterexample to multilinear weighted estimates]
{A counterexample to weighted estimates \\
for multilinear Fourier multipliers with \\
Sobolev regularity}
\author{Mai Fujita \and Naohito Tomita}

\address{Mai Fujita \\
Department of Mathematics \\
Graduate School of Science \\
Osaka University \\
Toyonaka, Osaka 560-0043, Japan}
\email{m-fujita@cr.math.sci.osaka-u.ac.jp}

\address{Naohito Tomita \\
Department of Mathematics \\
Graduate School of Science \\
Osaka University \\
Toyonaka, Osaka 560-0043, Japan}
\email{tomita@math.sci.osaka-u.ac.jp}

\keywords{H\"ormander multiplier theorem,
multilinear Fourier multipliers, Muckenhoupt class $A_p$}

\subjclass[2010]{42B15, 42B20, 42B25}

\begin{abstract}
The problem whether weighted estimates
for multilinear Fourier multipliers with Sobolev regularity
hold under weak condition on weights is considered.
\end{abstract}

\maketitle

\section{Introduction}\label{section1}
In this paper,
we consider weighted norm inequalities
for multilinear Fourier multipliers with Sobolev regularity.
Before discussing them, we briefly recall some basic facts
on weights in the multilinear theory.

In the linear case,
it is well known that the Hardy-Littlewood maximal operator $M$
is bounded on $L^p(w)$ if and only if the weight $w$ belongs to the
Muckenhoupt class $A_p$,
where $1<p<\infty$ and $M$ is defined by
\[
Mf(x)=\sup_{Q \ni x}\frac{1}{|Q|}\int_Q |f(y)|\, dy
\]
for $f \in L^1_{\mathrm{loc}}(\R^n)$
(see Section \ref{section2} for the definitions not given in this section).
Let $1<p_1,\dots,p_N<\infty$ and $1/p_1+\dots+1/p_N=1/p$.
For $\vec{w}=(w_1,\dots,w_N)$,
we set $\nu_{\vec{w}}=\prod_{k=1}^Nw_k^{p/p_k}$.
By H\"older's inequality, if
$\vec{w}=(w_1,\dots,w_N) \in
A_{p_1} \times \dots \times A_{p_N}$,
then
\begin{equation}\label{maximal-1}
\left\|\prod_{k=1}^N Mf_k \right\|_{L^p(\nu_{\vec{w}})}
\lesssim
\prod_{k=1}^N
\|f_k\|_{L^{p_k}(w_k)}.
\end{equation}
We define the multi(sub)linear maximal operator $\calM$ by
\[
\calM(\vec{f})(x)
=\sup_{Q \ni x}\prod_{k=1}^N
\frac{1}{|Q|}\int_Q |f_k(y_k)|\, dy_k
\]
for $\vec{f}=(f_1,\dots,f_N) \in L^1_{\mathrm{loc}}(\R^n)^N$,
and note that
\[
\calM(\vec{f})(x)
\le \prod_{k=1}^N Mf_k(x).
\]
Lerner, Ombrosi, P\'erez, Torres and Trujillo-Gonz\'alez \cite{LOPTT}
proved that $\calM$ is bounded from
$L^{p_1}(w_1) \times \dots \times L^{p_N}(w_N)$
to $L^p(\nu_{\vec{w}})$ if and only if
$\vec{w} \in A_{(p_1,\dots,p_N)}$.
It should be remarked that
there exists
$\vec{w} \in A_{(p_1,\dots,p_N)}$ such that
\eqref{maximal-1} does not hold (\cite[Remark 7.5]{LOPTT}).
This says that the inclusion
\begin{equation}\label{weight-inclusion}
A_{p_1} \times \dots \times A_{p_N}
\subset A_{(p_1,\dots,p_N)}
\end{equation}
is strict.

Let $m \in L^{\infty}(\R^{Nn})$.
The $N$-linear Fourier multiplier operator $T_m$
is defined by
\[
T_m(\vec{f})(x)
=\frac{1}{(2\pi)^{Nn}}
\int_{\R^{Nn}}
e^{ix\cdot(\xi_1+\dots+\xi_N)}
m(\xi)\widehat{f_1}(\xi_1) \dots \widehat{f_N}(\xi_N)\,
d\xi
\]
for $\vec{f}=(f_1,\dots, f_N) \in \calS(\R^n)^N$,
where $x \in \R^n$,
$\xi=(\xi_1,\dots,\xi_N) \in \R^n \times \dots \times \R^n$
and $d\xi=d\xi_1,\dots,d\xi_N$.
It is well known that in the unweighted case
the boundedness of $T_m$ holds if
\[
|\partial_{\xi}^{\alpha}m(\xi)|
\le C_{\alpha}|\xi|^{-|\alpha|}
\]
for sufficiently many multi-indices $\alpha$
(see, for example, \cite{CM, G-T, Kenig-Stein}
and also \cite{Grafakos-Miyachi-Tomita, Grafakos-Si, Miyachi-Tomita, Tomita}
for multipliers with Sobolev regularity).
We set
\begin{equation}\label{(multiplier)}
m_j(\xi)
=m(2^j\xi)\Psi(\xi),
\qquad j \in \Z,
\end{equation}
where $\Psi$ is a function in $\calS(\R^{Nn})$ satisfying
\[
\mathrm{supp}\, \Psi \subset
\{\xi \in \R^{Nn} : 1/2 \le |\xi| \le 2\},
\qquad
\sum_{k \in \Z}\Psi(\xi/2^k)=1,
\ \xi \in \R^{Nn} \setminus \{0\}.
\]
We use the notation 
$\|T_m\|_{L^{p_1}(w_1) \times \dots \times L^{p_N}(w_N)
\to L^p(\nu_{\vec{w}})}$
to denote the smallest constant $C$ satisfying 
\[
\|T_m(\vec{f})\|_{L^p(\nu_{\vec{w}})}
\leq C \prod_{k=1}^N
\|f_k\|_{L^{p_k}(w_k)}
\]
for all $\vec{f}=(f_1,\dots,f_N) \in \calS(\R^n)^N$.

Let $Nn/2 <s \le Nn$, 
$Nn/s<p_1,\dots,p_N<\infty$
and $1/p_1+\dots+1/p_N=1/p$.
It follows from \cite[Theorem 6.2]{Fujita-Tomita} that if
$\vec{w}=(w_1,\dots,w_N) \in
A_{p_1s/(Nn)} \times \dots \times A_{p_Ns/(Nn)}$,
then
\begin{equation}\label{weight-FT}
\|T_m\|_{L^{p_1}(w_1) \times \dots \times L^{p_N}(w_N) \to L^p(\nu_{\vec{w}})}
\lesssim
\sup_{j \in \Z}\|m_j\|_{W^{(s/N,\dots,s/N)}(\R^{Nn})},
\end{equation}
where the implicit constant is independent of $m$
(see Li, Xue and Yabuta \cite{Li-Xue-Yabuta} for the endpoint cases).
This result can also be obtained from another approach
of Hu and Lin \cite{Hu-Lin}.
Replacing $W^{(s/N,\dots,s/N)}$ by $W^s$,
Bui and Duong \cite{Bui-Duong},
Li and Sun \cite{Li-Sun} proved that if
$\vec{w}=(w_1,\dots,w_N) \in A_{(p_1s/(Nn),\dots,p_Ns/(Nn))}$,
then
\begin{equation}\label{weight-BD}
\|T_m\|_{L^{p_1}(w_1) \times \dots \times L^{p_N}(w_N) \to L^p(\nu_{\vec{w}})}
\lesssim
\sup_{j \in \Z}\|m_j\|_{W^s(\R^{Nn})}.
\end{equation}
By the embedding
\[
W^s(\R^{Nn}) \hookrightarrow
W^{(s/N,\dots,s/N)}(\R^{Nn}),
\]
we note that
the regularity condition in \eqref{weight-BD}
is stronger than that in \eqref{weight-FT}.
Of course,
it follows from \eqref{weight-inclusion} that
estimate \eqref{weight-BD} holds if
$\vec{w}=(w_1,\dots,w_N) \in
A_{p_1s/(Nn)} \times \dots \times A_{p_Ns/(Nn)}$.
See Table \ref{Table-1} for the three cases mentioned here.
If $N=1$ (namely, the linear case),
estimate \eqref{weight-FT} is the same as \eqref{weight-BD},
and due to Kurtz and Wheeden \cite{Kurtz-Wheeden}.

\begin{table}[htbp]
\begin{tabular}{|c|c|c|}
\hline
{}
&$A_{p_1s/(Nn)} \times \dots \times A_{p_Ns/(Nn)}$
&$A_{(p_1s/(Nn),\dots,p_Ns/(Nn))}$
\\ \hline
$W^s(\R^{Nn})$
&hold
&hold
\\ \hline
$W^{(s/N,\dots,s/N)}(\R^{Nn})$
&hold
&?
\\
\hline
\end{tabular}
\label{Table-1}
\caption{Weighted estimates from
$L^{p_1}(w_1) \times \dots \times L^{p_N}(w_N)$
to $L^p(\nu_{\vec{w}})$.}
\end{table}

The purpose of this paper is to answer the question
whether estimate \eqref{weight-FT} holds
under the condition
$\vec{w}=(w_1,\dots,w_N) \in A_{(p_1s/(Nn),\dots,p_Ns/(Nn))}$,
and the main result is the following:

\begin{thm}\label{main}
Let $N \ge 2$, $Nn/2 <s \le Nn$, 
$Nn/s<p_1,\dots,p_N<\infty$
and $1/p_1+\dots+1/p_N=1/p$.
Then there exists
$\vec{w}=(w_1,\dots,w_N) \in A_{(p_1s/(Nn),\dots,p_Ns/(Nn))}$
such that the estimate
\[
\|T_m\|_{L^{p_1}(w_1) \times \dots \times L^{p_N}(w_N) \to L^p(\nu_{\vec{w}})}
\lesssim
\sup_{j \in \Z}\|m_j\|_{W^{(s/N,\dots,s/N)}(\R^{Nn})}
\]
does not hold, where the implicit constant is independent of $m$.
\end{thm}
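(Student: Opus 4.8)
The plan is to fix a single weight vector $\vec w$ lying in $A_{(p_1s/(Nn),\dots,p_Ns/(Nn))}$ but \emph{not} in the product class $A_{p_1s/(Nn)} \times \dots \times A_{p_Ns/(Nn)}$, and to produce a sequence of multipliers $m^{(\ell)}$ with $\sup_{\ell}\sup_{j}\|m^{(\ell)}_j\|_{W^{(s/N,\dots,s/N)}}<\infty$ whose operator norms $\|T_{m^{(\ell)}}\|_{L^{p_1}(w_1)\times\dots\times L^{p_N}(w_N)\to L^p(\nu_{\vec w})}$ tend to infinity; since the asserted estimate has implicit constant independent of $m$, this contradiction disproves it. Writing $r_k=p_ks/(Nn)$ (so $r_k>1$ by $p_k>Nn/s$, and $r/r_k=p/p_k$ where $1/r=\sum_k 1/r_k$), the existence of such a $\vec w$ is precisely the strictness of \eqref{weight-inclusion} at the exponents $(r_1,\dots,r_N)$, i.e.\ the phenomenon recorded in \cite[Remark 7.5]{LOPTT}. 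The guiding heuristic is that a multiplier carrying only the product-type regularity $W^{(s/N,\dots,s/N)}$ should see only the product weight class, because it does not couple the input variables strongly enough to exploit the cooperation between the $w_k$ that $A_{(r_1,\dots,r_N)}$ permits; the work of the theorem is to convert this heuristic into a genuine lower bound.

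For the weight I would use power weights. Set $w_k(x)=|x|^{a_k}$, so that $\nu_{\vec w}(x)=|x|^{b}$ with $b=\sum_k a_k\,r/r_k$. By homogeneity the multilinear $A_{(r_1,\dots,r_N)}$ condition is scale invariant and reduces to local integrability of the averages over balls centred at the origin, namely $a_k<n(r_k-1)$ for every $k$ together with $b>-n$. This is strictly weaker than the product condition $-n<a_k<n(r_k-1)$ for every $k$, because $r/r_k=p/p_k<1$: a single exponent $a_{k_0}$ may reach or drop below $-n$ (forcing $w_{k_0}\notin A_{r_{k_0}}$, hence $\vec w$ outside the product class) while the coupled inequality $\sum_k a_k\,r/r_k>-n$ still holds. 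The always-available robust choice is $\vec w=(|x|^{-n},1,\dots,1)$, for which $\nu_{\vec w}=|x|^{-nr/r_1}$ is mild ($-nr/r_1\in(-n,0)$, locally integrable); in the symmetric model case $p_1=\dots=p_N$ one may instead take opposite exponents $a_1=-a_2$, $a_3=\dots=a_N=0$, so that $\nu_{\vec w}\equiv 1$ and the target becomes the unweighted $L^p$ norm, which makes the subsequent computation transparent. The point of these choices is that $w_k$ is small exactly where the input $f_k$ is permitted to be large: near the origin when $a_k>0$ and near infinity when $a_k<0$.

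The regularity $W^{(s/N,\dots,s/N)}$ enters through the kernel. Because $s/N\le n$, a multiplier with this and no better smoothness has a convolution kernel decaying only like $\prod_k|z_k|^{-s/N}$ (in an $L^2$-averaged sense), and this slow decay allows long-range transport between the separated regions supporting the $f_k$. I would take $m^{(\ell)}$ adapted to $\ell$ lacunary scales, for instance a truncation of a fixed model multiplier whose kernel is comparable to a positive, non-oscillating, slowly decaying profile; one checks that $\sup_j\|m^{(\ell)}_j\|_{W^{(s/N,\dots,s/N)}}$ stays bounded in $\ell$ while the isotropic norm $\sup_j\|m^{(\ell)}_j\|_{W^{s}}$ blows up, so that the multipliers live exactly in the gap between \eqref{weight-BD} and \eqref{weight-FT} (a tensor product $\prod_k m_0(\xi_k)$ with $m_0\in W^{s/N}(\R^n)\setminus W^s(\R^n)$ already exhibits this dichotomy of norms). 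Choosing $f_k$ to be superpositions of bumps concentrated in the weight-favourable region for $w_k$, I would bound $\|T_{m^{(\ell)}}(\vec f)\|_{L^p(\nu_{\vec w})}$ from below by the contribution of the intermediate region onto which the kernel spreads the multilinear mass, and compare with $\prod_k\|f_k\|_{L^{p_k}(w_k)}$; the deficit between the slow decay of order $s/N$ and the faster decay of order $s$ needed for \eqref{weight-BD}, together with $p<p_k$, should produce a factor growing in $\ell$ (a power of the number of scales).

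The hard part will be the model multiplier: I must exhibit a single $m$ with finite product-type Sobolev norm whose kernel is genuinely slowly decaying and essentially non-cancelling on the relevant cones, so that the lower bound for $T_{m^{(\ell)}}(\vec f)$ survives and is not destroyed by oscillation. This is the usual difficulty in saturating Hörmander--Mihlin/Kurtz--Wheeden-type weighted bounds from below, now to be upgraded from the linear sharp examples to the multilinear, multi-scale setting. A secondary technical point is to pass from the transparent symmetric model (where $\nu_{\vec w}\equiv 1$) to the full parameter range of the statement, replacing the unweighted target by the mild homogeneous weight $\nu_{\vec w}=|x|^{b}$ with $b>-n$ and verifying that the same lower bound persists; since $\nu_{\vec w}$ is locally integrable and scale invariant, this should affect only the bookkeeping of exponents and not the mechanism. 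Throughout, the contrast with \eqref{weight-BD} is the organising principle: the additional $s-s/N$ derivatives in the isotropic condition force enough kernel decay to suppress the long-range transport, which is exactly why \eqref{weight-BD} holds for these weights while \eqref{weight-FT} must fail.
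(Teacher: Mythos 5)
There is a genuine gap: your proposal defers exactly the step that constitutes the proof. You write that ``the hard part will be the model multiplier'' with bounded product-type Sobolev norm, slowly decaying non-cancelling kernel, and a surviving lower bound for $T_{m^{(\ell)}}(\vec f)$ --- but this object is never constructed, the claimed blow-up in the number $\ell$ of lacunary scales is never computed, and no inequality between the three quantities (multiplier norm, input norms, output norm) is ever established. As it stands this is a research programme, not a proof. Moreover, the quantitative coupling between the weight exponent and the smoothness parameter, which is the actual engine of the counterexample, is absent. In the paper the weight is $\vec w=(|x|^{\alpha_1},|x|^{\alpha_2},1,\dots,1)$ with $\alpha_1$ chosen \emph{strictly and quantitatively} below $-n$ so that $\alpha_1/p_1<-n/p_1-s/N+n/2$ (condition \eqref{alpha_1}), while $\alpha_2>-n$ compensates in \eqref{alpha_12} to keep $\vec w$ in the multilinear class (Lemma \ref{range}). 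Your ``always-available robust choice'' $\vec w=(|x|^{-n},1,\dots,1)$ sits exactly at the borderline: with $\alpha_1=-n$ the inequality \eqref{alpha_1} would require $s/N<n/2$, which contradicts the hypothesis $s>Nn/2$, so the natural scaling mechanism extracts no contradiction from this weight. You would need the same strictly-below-$-n$, $s$-dependent choice, and nothing in your sketch identifies this necessity or shows that your multi-scale mechanism circumvents it.

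The contrast with the paper's proof shows how much machinery your plan adds without need. No lacunary multi-scale construction, no ``long-range kernel transport,'' and no kernel decay analysis are required: the paper uses a \emph{single} frequency-localized tensor bump $m^{(\epsilon)}(\xi)=\widehat\varphi((\xi_1-e_1)/\epsilon)\,\widehat\varphi(\xi_2)\cdots\widehat\varphi(\xi_N)$, supported in one dyadic shell so that only $m^{(\epsilon)}_0$ is nonzero, whose product norm costs only $\epsilon^{-s/N+n/2}$ (this anisotropic squeeze in the single variable $\xi_1$ is precisely what $W^{(s/N,\dots,s/N)}$ undercharges relative to $W^s$, replacing your dichotomy of kernel decays). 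Testing against $\widehat{f_1}=\epsilon^{n/p_1-n}\widehat\varphi((\cdot-e_1)/\epsilon)$ and $f_k=\psi$ gives the explicit output $\epsilon^{n/p_1}e^{ie_1\cdot x}(\varphi*\varphi)(\epsilon x)\,\varphi(x)^{N-1}$, bounded below by $C\epsilon^{n/p_1}$ on a fixed ball, while $\|f_1\|_{L^{p_1}(w_1)}=\epsilon^{-\alpha_1/p_1}\|\varphi\|_{L^{p_1}(w_1)}$; condition \eqref{alpha_1} then makes the available budget $\epsilon^{-s/N+n/2-\alpha_1/p_1}=o(\epsilon^{n/p_1})$ as $\epsilon\to0$, the desired contradiction. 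Note also a technical point your sketch passes over: since $\alpha_1<-n$, finiteness of $\|\varphi\|_{L^{p_1}(|x|^{\alpha_1})}$ is not automatic and is secured by choosing $\widehat\varphi$ with vanishing moments up to order $\ell$ with $p_1(\ell+1)+\alpha_1>-n$ (Lemma \ref{existence}), so that $|\varphi(x)|\lesssim|x|^{\ell+1}$ near the origin. Your strategy of bumps ``concentrated in weight-favourable regions'' would face the same singularity-at-the-origin issue and does not address it.
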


It should be pointed out that
the statement similar to Theorem \ref{main}
holds even if we replace
$L^p(\nu_{\vec{w}})$ by $L^{p,\infty}(\nu_{\vec{w}})$
(see Remark \ref{remark-weak}).
Using the class $A_{\vec{P}/\vec{Q}}$
which coincides with $A_{(p_1s/(Nn),\dots,p_Ns/(Nn))}$
if $\vec{P}=(p_1,\dots,p_N)$ and $\vec{Q}=(Nn/s,\dots,Nn/s)$,
Jiao \cite{Jiao} gave a generalization of \eqref{weight-BD}.
See Remark \ref{remark-generalization} for the result
corresponding to this weight class.

\section{Preliminaries}\label{section2}
For two non-negative quantities $A$ and $B$,
the notation $A \lesssim B$ means that
$A \le CB$ for some unspecified constant $C>0$,
and $A \approx B$ means that
$A \lesssim B$ and $B \lesssim A$.
For $1<p<\infty$,
$p'$ is the conjugate exponent of $p$, 
that is, $1/p+1/p'=1$.

Let $\calS(\R^n)$ be the Schwartz space of
all rapidly decreasing smooth functions.
We define the Fourier transform $\calF f$
and the inverse Fourier transform $\calF^{-1}f$
of $f \in \calS(\R^n)$ by
\[
\calF f(\xi)
=\widehat{f}(\xi)
=\int_{\R^n}e^{-ix\cdot\xi} f(x)\, dx
\quad \text{and} \quad
\calF^{-1}f(x)
=\frac{1}{(2\pi)^n}
\int_{\R^n}e^{ix\cdot \xi} f(\xi)\, d\xi.
\]

The (usual) Sobolev space $W^s(\R^{Nn})$,
$s \in \R$,
is defined by the norm
\[
\|F\|_{W^s(\R^{Nn})}
=\left(\int_{\R^{Nn}}
(1+|\xi|^2)^s|\widehat{F}(\xi)|^2\, d\xi \right)^{1/2},
\]
where $\widehat{F}$ is the Fourier transform
in all the variables.
The Sobolev space of product type $W^{(s_1,\dots,s_N)}(\R^{Nn})$,
$s_1,\dots,s_N \in \R$,
is also defined by the norm
\[
\|F\|_{W^{(s_1,\dots,s_N)}(\R^{Nn})}
=\left(\int_{\R^{Nn}}
(1+|\xi_1|^2)^{s_1} \dots (1+|\xi_N|^2)^{s_N}
|\widehat{F}(\xi)|^2\, d\xi \right)^{1/2},
\]
where $\xi=(\xi_1,\dots,\xi_N) \in \R^n \times \dots \times \R^n$.

Let $w \ge 0$.
For a measurable set $E$,
we write $w(E)=\int_E w(x)\, dx$,
and simply $|E|=\int_E dx$ for the case $w \equiv 1$.
The weighted Lebesgue space $L^p(w)$,
$0<p<\infty$,
consists of all measurable functions $f$ on $\R^n$
such that
\[
\|f\|_{L^p(w)}
=\left(\int_{\R^n}|f(x)|^p w(x)\, dx \right)^{1/p}<\infty.
\]
The weighted weak Lebesgue space $L^{p,\infty}(w)$
is also defined by the norm
\[
\|f\|_{L^{p,\infty}(w)}
=\sup_{\lambda>0}
\lambda
\left\{w\left(\left\{x \in \R^n \,:\,
|f(x)| > \lambda \right\}\right)\right\}^{1/p}.
\]

We say that a weight $w$ belongs to
the Muckenhoupt class $A_{p}$, $1<p<\infty$, if
\[
\sup_{Q}\left( \frac{1}{|Q|}\int_{Q}w(x) \, dx\right)
\left( \frac{1}{|Q|}\int_{Q}w(x)^{1-p'} \, dx\right)^{p-1} < \infty,
\]
where the supremum is taken over all cubes $Q \subset \R^n$
with sides parallel to the axes.
We also say that $\vec{w}=(w_1,\dots,w_N)$ belongs to
the class $A_{(p_1,\dots,p_N)}$,
$1<p_1,\dots,p_N<\infty$, if
\[
\sup_Q
\left(\frac{1}{|Q|}\int_Q \nu_{\vec{w}}(x)\, dx\right)^{1/p}
\prod_{k=1}^N
\left(\frac{1}{|Q|}\int_Q w_k(x)^{1-p_k'}\, dx\right)^{1/p_k'}
< \infty,
\]
where $1/p_1+\dots+1/p_N=1/p$ and
$\nu_{\vec{w}}=\prod_{k=1}^N w_k^{p/p_k}$.

The proof of the following lemma is based on the argument
of \cite[Example 9.1.7]{Grafakos}.
\begin{lem}\label{range}
Let $N \ge 2$, $1<p_1,\dots,p_N<\infty$ and $1/p_1+\dots+1/p_N=1/p$.
If $\alpha_1,\alpha_2$ satisfy
$\alpha_1/p_1+\alpha_2/p_2>-n/p$
and $\alpha_k<n(p_k-1)$ for $k=1,2$,
then
\[
\vec{w}=(w_1,w_2,w_3,w_4,\dots,w_N)
=(|x|^{\alpha_1},|x|^{\alpha_2},1,1,\dots,1)
\]
belongs to the class $A_{(p_1,\dots,p_N)}$.
\end{lem}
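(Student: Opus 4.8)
The plan is to reduce the multilinear $A_{(p_1,\dots,p_N)}$ condition to a standard estimate for averages of power weights over cubes. First I would note that the indices $k \ge 3$ are inert: since $w_k \equiv 1$ there, the corresponding factors $\left(\frac{1}{|Q|}\int_Q w_k^{1-p_k'}\,dx\right)^{1/p_k'}$ in the $A_{(p_1,\dots,p_N)}$ expression equal $1$, and they contribute nothing to $\nu_{\vec{w}}=\prod_{k=1}^N w_k^{p/p_k}$. Hence $\nu_{\vec{w}}(x)=|x|^{\beta}$ with $\beta=\alpha_1 p/p_1+\alpha_2 p/p_2$, and the quantity to bound is
\[
\sup_Q \left(\frac{1}{|Q|}\int_Q |x|^{\beta}\,dx\right)^{1/p}
\prod_{k=1}^2 \left(\frac{1}{|Q|}\int_Q |x|^{\alpha_k(1-p_k')}\,dx\right)^{1/p_k'}.
\]

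Next I would record the key averaging estimate, in the spirit of \cite[Example 9.1.7]{Grafakos}: if $\gamma>-n$, then for every cube $Q$ with center $c_Q$ and side length $\ell_Q$,
\[
\frac{1}{|Q|}\int_Q |x|^{\gamma}\,dx \lesssim (|c_Q|+\ell_Q)^{\gamma},
\]
with implicit constant depending only on $n$ and $\gamma$. This is proved by distinguishing the case $|c_Q|\gtrsim \ell_Q$, where $|x|\approx|c_Q|$ throughout $Q$, from the case $|c_Q|\lesssim\ell_Q$, where $Q$ lies in a ball of radius $\approx\ell_Q$ about the origin and the integral is evaluated in polar coordinates; the hypothesis $\gamma>-n$ is precisely what makes $|x|^{\gamma}$ locally integrable near the origin, so that the average is finite. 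For the $A_{(p_1,\dots,p_N)}$ condition only this upper bound is needed. I would then check that each of the three exponents exceeds $-n$: indeed $\beta>-n$ is the hypothesis $\alpha_1/p_1+\alpha_2/p_2>-n/p$, while $\alpha_k(1-p_k')>-n$ rearranges to $\alpha_k<n(p_k-1)$, which is the remaining hypothesis for $k=1,2$.

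Applying the averaging estimate to each of the three factors and writing $R=|c_Q|+\ell_Q$, the product under the supremum is bounded, for each $Q$, by a constant times $R^{\theta}$ with
\[
\theta=\frac{\beta}{p}+\frac{\alpha_1(1-p_1')}{p_1'}+\frac{\alpha_2(1-p_2')}{p_2'}.
\]
Using $(1-p_k')/p_k'=-1/p_k$ and $\beta/p=\alpha_1/p_1+\alpha_2/p_2$, a direct computation gives $\theta=0$, so the bound is uniform in $Q$ and $\vec{w}\in A_{(p_1,\dots,p_N)}$ follows. The only genuine work is the averaging estimate; once it is in place, the rest is bookkeeping, and the exact cancellation $\theta=0$ is forced by the definition of $\nu_{\vec{w}}$. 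The main obstacle, modest as it is, lies in the case distinction for the averaging estimate and in matching each hypothesis on $\alpha_1,\alpha_2$ to the finiteness of the corresponding average.
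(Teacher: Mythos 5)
Your proposal is correct and follows essentially the same route as the paper: the paper likewise discards the indices $k\ge 3$, splits into the cases where the ball is far from or near the origin (using $|x|\approx|x_0|$ in the first case and local integrability from the exponents exceeding $-n$ in the second), and exploits the same exact cancellation of homogeneity exponents. Your only cosmetic difference is packaging the case distinction into a single averaging estimate $\frac{1}{|Q|}\int_Q|x|^{\gamma}\,dx\lesssim(|c_Q|+\ell_Q)^{\gamma}$ applied factorwise, whereas the paper runs the two cases directly on the full product.
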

\begin{proof}
Since $w_k=1$ for $k \ge 3$,
the desired conclusion follows from
\[
\sup_B
\left(\frac{1}{|B|}\int_B w_1^{p/p_1}w_2^{p/p_2}\, dx\right)^{1/p}
\prod_{k=1}^2
\left(\frac{1}{|B|}\int_B w_k^{1-p_k'}\, dx\right)^{1/p_k'}
< \infty,
\]
where the supremum is taken over all balls $B$ in $\R^n$
(instead of cubes).
Let $B$ be the ball with center $x_0$ and radius $r$.

We first consider the case $|x_0| \ge 2r$.
In this case, $|x| \approx |x_0|$ for all $x \in B$.
Then
\begin{align*}
&\left(\frac{1}{|B|}\int_B |x|^{p(\alpha_1/p_1+\alpha_2/p_2)}\,
dx\right)^{1/p}
\prod_{k=1}^2
\left(\frac{1}{|B|}\int_B |x|^{\alpha_k(1-p_k')}\, dx\right)^{1/p_k'}
\\
&\approx |x_0|^{\alpha_1/p_1+\alpha_2/p_2}
|x_0|^{\alpha_1(1/p_1'-1)}|x_0|^{\alpha_2(1/p_2'-1)}
=1.
\end{align*}

We next consider the case $|x_0|<2r$.
In this case, $B \subset \{x \in \R^n \,:\, |x|<3r\}$.
Since $p(\alpha_1/p_1+\alpha_2/p_2)>-n$
and $\alpha_k(1-p_k')>-n$ for $k=1,2$,
we have
\[
\int_B |x|^{p(\alpha_1/p_1+\alpha_2/p_2)}\, dx
\le \int_{|x| <3r} |x|^{p(\alpha_1/p_1+\alpha_2/p_2)}\, dx
\lesssim r^{p(\alpha_1/p_1+\alpha_2/p_2)+n}
\]
and
\[
\int_B |x|^{\alpha_k(1-p_k')}\, dx
\le \int_{|x|<3r} |x|^{\alpha_k(1-p_k')}\, dx
\lesssim r^{\alpha_k(1-p_k')+n},
\qquad k=1,2.
\]
Hence,
\begin{align*}
&\left(\frac{1}{|B|}\int_B |x|^{p(\alpha_1/p_1+\alpha_2/p_2)}\,
dx\right)^{1/p}
\prod_{k=1}^2
\left(\frac{1}{|B|}\int_B |x|^{\alpha_k(1-p_k')}\, dx\right)^{1/p_k'}
\\
&\lesssim r^{\alpha_1/p_1+\alpha_2/p_2}
r^{\alpha_1(1/p_1'-1)}r^{\alpha_2(1/p_2'-1)}
=1.
\end{align*}
The proof is complete.
\end{proof}

The following fact is known,
but we shall give a proof for the reader's convenience.
\begin{lem}\label{existence}
Let $r>0$, and let $\ell$ be a non-negative integer.
Then there is a function $\varphi \in \calS(\R^n)$ so that
$\mathrm{supp}\, \varphi \subset \{x \in \R^n \,:\, |x| \le r\}$,
$\int_{\R^n}\varphi(x)^2\, dx \neq 0$
and $\int_{\R^n}x^{\beta}\varphi(x)\, dx=0$
for all multi-indices $\beta$ satisfying $|\beta| \le \ell$.
\end{lem}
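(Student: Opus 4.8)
The plan is to construct $\varphi$ directly by differentiating a fixed bump function, which converts the moment conditions into a triviality via integration by parts. First I would fix any nonzero function $\eta \in \calS(\R^n)$ with $\mathrm{supp}\,\eta \subset \{x \in \R^n : |x| \le r\}$ (for instance a standard radial smooth bump), and then set
\[
\varphi = \partial_1^{\ell+1}\eta,
\]
the $(\ell+1)$-st partial derivative in the first variable. Since differentiation does not enlarge the support, we automatically have $\mathrm{supp}\,\varphi \subset \{|x| \le r\}$, and clearly $\varphi \in \calS(\R^n)$.

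For the moment conditions, fix a multi-index $\beta$ with $|\beta| \le \ell$. Integrating by parts $\ell+1$ times in $x_1$, with all boundary terms vanishing by compact support, gives
\[
\int_{\R^n} x^{\beta}\varphi(x)\, dx
= (-1)^{\ell+1}\int_{\R^n} \bigl(\partial_1^{\ell+1} x^{\beta}\bigr)\eta(x)\, dx.
\]
Writing $x^{\beta} = x_1^{\beta_1}\cdots x_n^{\beta_n}$, the factor $\partial_1^{\ell+1} x_1^{\beta_1}$ vanishes because $\beta_1 \le |\beta| \le \ell < \ell+1$, so the right-hand side is $0$. Hence $\int_{\R^n} x^{\beta}\varphi\, dx = 0$ for every $\beta$ with $|\beta| \le \ell$, which is exactly the required family of conditions; note that this includes indices with $\beta_1 = 0$, since those are killed by the same derivative acting on the $x_1^{\beta_1}$ factor.

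It remains to check $\int_{\R^n}\varphi^2 \neq 0$, equivalently that $\varphi$ is not identically zero, and I expect this to be the only point genuinely deserving care. If $\varphi \equiv 0$, then $\partial_1^{\ell+1}\eta \equiv 0$, which would force $\eta(x_1,x')$ to be a polynomial in $x_1$ of degree at most $\ell$ for each fixed $x' = (x_2,\dots,x_n)$; but a compactly supported polynomial is identically zero, contradicting $\eta \neq 0$. Therefore $\varphi$ is a nonzero continuous function, so $\int_{\R^n}\varphi(x)^2\, dx > 0$, and in particular it is nonzero. This completes the construction; nothing beyond a single high-order derivative of one fixed bump is needed, and the same argument works with any coordinate direction in place of $x_1$.
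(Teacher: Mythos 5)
Your proof is correct, and it takes a more elementary route than the paper's, though both rest on the same underlying idea: realize $\varphi$ as a high-order derivative of a compactly supported bump, so that the moment conditions come essentially for free. The paper sets $\varphi=(-\Delta)^{\ell+1}\psi$ and verifies everything on the Fourier side: the moments vanish because $\partial^{\beta}\widehat{\varphi}(0)=\partial^{\beta}\bigl(|\xi|^{2(\ell+1)}\widehat{\psi}(\xi)\bigr)\big|_{\xi=0}=0$ for $|\beta|\le\ell$, and $\int\varphi^2\neq 0$ is obtained via Plancherel, using that $|\xi|^{2(\ell+1)}\widehat{\psi}(\xi)$ is not identically zero. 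You instead take $\varphi=\partial_1^{\ell+1}\eta$ and stay entirely in physical space: integration by parts kills the moments since $\partial_1^{\ell+1}x_1^{\beta_1}=0$ for $\beta_1\le\ell$, and non-triviality follows from your observation that a smooth compactly supported function whose $(\ell+1)$-st $x_1$-derivative vanishes identically would be, on each line in the $x_1$-direction, a compactly supported polynomial of degree at most $\ell$, hence zero. Your version avoids the Fourier transform and Plancherel altogether; the paper's Fourier-side phrasing is arguably more natural in context, since the lemma is applied to $\widehat{\varphi}$ and what is ultimately used is the vanishing of derivatives of $\widehat{\varphi}$ at the origin, but nothing is lost in your formulation. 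One small point worth making explicit: the inference from $\varphi\not\equiv 0$ to $\int\varphi^2>0$ uses that $\eta$ (hence $\varphi$) is real-valued --- the same hypothesis the paper imposes on $\psi$ before invoking Plancherel --- which your choice of a standard real bump does satisfy, but which should be stated, since for complex-valued $\varphi$ the integral $\int\varphi^2$ could vanish without $\varphi$ being zero.
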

\begin{proof}
Let $\psi \in \calS(\R^n) \setminus \{0\}$
be a real valued function satisfying
$\mathrm{supp}\, \psi \subset \{x \in \R^n \,:\, |x| \le r\}$,
and set $\varphi(x)=(-\Delta)^{\ell+1}\psi(x)$,
where
$\Delta=\partial^2/\partial x_1^2+\dots+\partial^2/\partial x_n^2$.

We shall check that $\varphi$ satisfies all the required conditions.
Obviously, $\mathrm{supp}\, \varphi \subset \{x \in \R^n \,:\, |x| \le r\}$.
Since $\psi$ is not identically equal to zero,
so is $\widehat{\psi}$.
Thus, we can take $\xi_0 \in \R^n$ and $r_0>0$
such that $\widehat{\psi}(\xi) \neq 0$
if $|\xi-\xi_0| \le r_0$.
Since $\varphi$ is a real valued function,
we have by Plancherel's theorem
\begin{align*}
\int_{\R^n}\varphi(x)^2\, dx
&=\int_{\R^n}\left|
(-\Delta)^{\ell+1}\psi(x)\right|^2\, dx
=\frac{1}{(2\pi)^n}
\int_{\R^n}\left|
|\xi|^{2(\ell+1)}\widehat{\psi}(\xi)
\right|^2 \, d\xi
\\
&\ge \frac{1}{(2\pi)^n}
\int_{|\xi-\xi_0| \le r_0}\left|
|\xi|^{2(\ell+1)}\widehat{\psi}(\xi)
\right|^2 \, d\xi
\gtrsim \int_{|\xi-\xi_0| \le r_0}
|\xi|^{4(\ell+1)}
\, d\xi
\neq 0.
\end{align*}
Finally,
\begin{align*}
\int_{\R^n}(-ix)^{\beta}\varphi(x)\, dx
=\partial^{\beta}\widehat{\varphi}(\xi)
\Big|_{\xi=0}
=\partial^{\beta}
\left(|\xi|^{2(\ell+1)}\widehat{\psi}(\xi)\right)
\Big|_{\xi=0}=0
\end{align*}
for $|\beta| \le \ell$.
This completes the proof.
\end{proof}

\section{Proof of Theorem \ref{main}}\label{section3}
In this section,
using the ideas given in
\cite[Section 7]{Grafakos-Miyachi-Tomita, Miyachi-Tomita}
and \cite[Remark 7.5]{LOPTT},
we shall prove Theorem \ref{main}.

\begin{proof}[Proof of Theorem \ref{main}]
Let $N \ge 2$, $Nn/2 <s \le Nn$, 
$Nn/s<p_1,\dots,p_N<\infty$
and $1/p_1+\dots+1/p_N=1/p$.
We first claim that there exist
$\alpha_1<-n$ and $\alpha_2>-n$ such that
\begin{equation}\label{alpha_12}
\alpha_1/p_1+\alpha_2/p_2>-n/p,
\qquad
\alpha_k/p_k<s/N-n/p_k,
\quad k=1,2,
\end{equation}
and
\begin{equation}\label{alpha_1}
\alpha_1/p_1<-n/p_1-s/N+n/2.
\end{equation}
Indeed,
since $-n/p+n/p_1+s/N-n/2<s/N-n/p_2$
and $s/N-n/p_2>0$,
we can take $\alpha_2 \ge 0$ satisfying
$-n/p+n/p_1+s/N-n/2<\alpha_2/p_2<s/N-n/p_2$.
Then, since $-\alpha_2/p_2-n/p<-n/p_1-s/N+n/2$,
we can take $\alpha_1$ satisfying
$-\alpha_2/p_2-n/p<\alpha_1/p_1<-n/p_1-s/N+n/2$.
It is easy to check that these $\alpha_1,\alpha_2$
satisfy
$\alpha_1<-n$, $\alpha_2>-n$,
\eqref{alpha_12} and \eqref{alpha_1}.

For $\alpha_1<-n$ and $\alpha_2>-n$
satisfying \eqref{alpha_12} and \eqref{alpha_1},
we set
\begin{equation}\label{power-weight}
\vec{w}=(w_1,w_2,w_3,w_4,\dots,w_N)
=(|x|^{\alpha_1},|x|^{\alpha_2},1,1,\dots,1).
\end{equation}
Let $(q_1,\dots,q_N)=(p_1s/(Nn),\dots,p_Ns/(Nn))$
and $1/q_1+\dots+1/q_N=1/q$.
Since $p/p_k=q/q_k$ for $k=1,2$,
it follows from \eqref{alpha_12} that
$\alpha_1/q_1+\alpha_2/q_2>-n/q$
and $\alpha_k<n(q_k-1)$ for $k=1,2$.
Then, by Lemma \ref{range},
we see that $\vec{w} \in A_{(q_1,\dots,q_N)}$.

We shall prove Theorem \ref{main} with
$\vec{w}$ defined by \eqref{alpha_12},
\eqref{alpha_1} and \eqref{power-weight}
by contradiction.
To do this, we assume that the estimate
\begin{equation}\label{assumption}
\|T_m\|_{L^{p_1}(w_1) \times \dots \times L^{p_N}(w_N) \to L^p(\nu_{\vec{w}})}
\lesssim
\sup_{j \in \Z}\|m_j\|_{W^{(s/N,\dots,s/N)}}
\end{equation}
holds, where the implicit constant is independent of $m$.
Let $\widehat{\varphi} \in \calS(\R^n)$ be a function
as in Lemma \ref{existence} with
$r=1/(10N)$ and $\ell$ satisfying $p_1(\ell+1)+\alpha_1>-n$:
$\mathrm{supp}\, \widehat{\varphi} \subset
\{\eta \in \R^n \,:\, |\eta| \le 1/(10N)\}$,
\begin{align}
&\int_{\R^n}\widehat{\varphi}(\eta)^2\, d\eta \neq 0,
\label{not-vanish} \\
&\int_{\R^n}\eta^{\beta}\widehat{\varphi}(\eta)\, d\eta=0,
\quad |\beta| \le \ell.
\label{vanish}
\end{align}
For sufficiently small $\epsilon>0$,
we set
\begin{equation}\label{example-m}
m^{(\epsilon)}(\xi)
=\widehat{\varphi}((\xi_1-e_1)/\epsilon)
\widehat{\varphi}(\xi_2)
\widehat{\varphi}(\xi_3) \times \dots \times
\widehat{\varphi}(\xi_N),
\end{equation}
where $e_1=(1,0,0,\dots,0) \in \R^n$.

We shall estimate the Sobolev norm of
$m^{(\epsilon)}$ as follows:
\begin{equation}\label{estimate-m}
\sup_{j \in \Z}\|m^{(\epsilon)}_j\|_{W^{(s/N,\dots,s/N)}}
\lesssim \epsilon^{-s/N+n/2},
\end{equation}
where $m^{(\epsilon)}_j$ is defined by
\eqref{(multiplier)} with $m$ replaced by $m^{(\epsilon)}$.
To do this, we choose the function $\Psi \in \calS (\R^{Nn})$
appearing in the definition of $m^{(\epsilon)}_j$ so that
\begin{align*}
&\mathrm{supp}\, \Psi \subset \{\xi\in \R^{Nn}
\,:\,
2^{-1/2-\gamma} \le |\xi| \le 2^{1/2+\gamma}\}, 
\\
&\Psi (\xi) =1
\quad \text{if} \quad
2^{-1/2 +\gamma}\le |\xi| \le 2^{1/2-\gamma},
\end{align*}
where $\gamma>0$ is a sufficiently small number. 
If $\epsilon>0$ is sufficiently small, then
\begin{align*}
\mathrm{supp}\, m^{(\epsilon)}
&\subset 
\{(\xi_1,\dots,\xi_N)
\,:\,
|\xi_1-e_1| \le \epsilon/(10N), \; 
|\xi_k| \le 1/(10N), \ 2 \le k \le N\}
\\
&
\subset 
\{(\xi_1,\dots,\xi_N)
\,:\,  2^{-1/2 +\gamma}\le
(|\xi_1|^2+\dots+|\xi_N|^2)^{1/2} \le 2^{1/2-\gamma}\} .
\end{align*}
This implies
\[
m^{(\epsilon)}_j(\xi)
=m^{(\epsilon)}(2^j \xi )\Psi (\xi)=
\begin{cases}
m^{(\epsilon)}(\xi) & \text{if} \quad j=0
\\
0 & \text{if} \quad j\neq 0,
\end{cases}
\]
and consequently
\begin{align*}
\sup_{j\in \Z}
\|m^{(\epsilon)}_j\|_{W^{(s/N,\dots,s/N)}}
=\|m^{(\epsilon)}\|_{W^{(s/N,\dots,s/N)}}
=\|\widehat{\varphi}((\cdot-e_1)/\epsilon)\|_{W^{s/N}}
\|\widehat{\varphi}\|_{W^{s/N}}^{N-1} .
\end{align*}
Taking a sufficiently large $L>0$, we have
\begin{align*}
&\|\widehat{\varphi} ((\cdot-e_1) /\epsilon)\|_{W^{s/N}}
=(2\pi)^{n}\|(1+|\cdot|^2)^{s/(2N)}\epsilon^n
\varphi(\epsilon \, \cdot)\|_{L^2}
\\
&\lesssim \epsilon^n 
\left(
\int_{\R^n} (1+|x|)^{2s/N} (1+\epsilon |x|)^{-2L}\,dx
\right)^{1/2}
\\
&\lesssim \epsilon^n 
\left(\int_{|x|\le 1} \,dx
+\int_{1<|x|\le 1/\epsilon}
|x|^{2s/N}\,dx 
+\int_{|x|>1/\epsilon}
|x|^{2s/N}(\epsilon |x|)^{-2L}\,dx 
\right)^{1/2}
\\
&\lesssim \epsilon^{-s/N +n/2},
\end{align*}
and we obtain \eqref{estimate-m}.

Let $\psi \in \calS(\R^n)$ be such that
$\widehat{\psi}=1$ on $\mathrm{supp}\, \widehat{\varphi}$,
and set
\begin{equation}\label{example-f}
\widehat{f_1}(\xi_1)=
\widehat{f_1^{(\epsilon)}}(\xi_1)
=\epsilon^{n/p_1-n}\widehat{\varphi}((\xi_1-e_1)/\epsilon),
\qquad
\widehat{f_k}(\xi_k)=\widehat{\psi}(\xi_k),
\ 2 \le k \le N.
\end{equation}
To estimate $\|f_{1}^{(\epsilon)}\|_{L^{p_1}(w_1)}$,
we check that $\varphi$ belongs to $L^{p_1}(w_1)$.
It follows from \eqref{vanish} that
$\partial^{\beta}\varphi(0)=0$
for $|\beta| \le \ell$.
Combining this with Taylor's formula,
we see that $|\varphi(x)| \lesssim |x|^{\ell+1}$.
Then,
since $p_1(\ell+1)+\alpha_1>-n$,
\begin{align*}
\int_{|x|<1}
\left|\varphi(x) \right|^{p_1}|x|^{\alpha_1} dx
\lesssim \int_{|x|<1}
|x|^{p_1(\ell+1)+\alpha_1}\, dx<\infty.
\end{align*}
On the other hand, it is obvious that
\[
\int_{|x| \ge 1}
\left|\varphi(x) \right|^{p_1}|x|^{\alpha_1} dx
<\infty.
\]
Consequently, $\varphi$ belongs to $L^{p_1}(w_1)$.
Hence,
\begin{equation}\label{estimate-f}
\|f_1^{(\epsilon)}\|_{L^{p_1}(w_1)}
=\left(\int_{\R^n}
|\epsilon^{n/p_1}\varphi(\epsilon x)|^{p_1}
|x|^{\alpha_1}\, dx\right)^{1/p_1}
=\epsilon^{-\alpha_1/p_1}\|\varphi\|_{L^{p_1}(w_1)}.
\end{equation}
The condition $\alpha_2>-n$ implies that
$w_2=|x|^{\alpha_2}$ is locally integrable.
Then,
since $\psi$ is rapidly decreasing,
we have
$\|f_2\|_{L^{p_2}(w_2)}
=\|\psi\|_{L^{p_2}(|x|^{\alpha_2})}<\infty$
and
$\|f_k\|_{L^{p_k}(w_k)}
=\|\psi\|_{L^{p_k}}<\infty$
for $k=3,\dots,N$.

We shall finish the proof.
By \eqref{example-m} and \eqref{example-f},
\begin{align}\label{T_m-form}
T_{m^{(\epsilon)}}(\vec{f})(x)
&=\calF^{-1}[\widehat{\varphi}((\cdot-e_1)/\epsilon)\widehat{f_1}](x)
\calF^{-1}[\widehat{\varphi}\widehat{f_2}](x)
\dots \calF^{-1}[\widehat{\varphi}\widehat{f_N}](x)
\\
&=\calF^{-1}[\epsilon^{n/p_1-n}
\widehat{\varphi}((\cdot-e_1)/\epsilon)^2](x)
\calF^{-1}[\widehat{\varphi}](x)
\dots \calF^{-1}[\widehat{\varphi}](x)
\nonumber \\
&=\epsilon^{n/p_1}e^{ie_1\cdot x}(\varphi*\varphi)(\epsilon x)
\varphi(x)^{N-1},
\nonumber
\end{align}
where $\calF^{-1}$ is the inverse Fourier transform on $\R^n$.
Since $\varphi$ is not identically equal to zero
and $\nu_{\vec{w}}=|x|^{p(\alpha_1/p_1+\alpha_2/p_2)}$ is locally integrable
(see \eqref{alpha_12}),
we can take $R>0$ satisfying
\begin{equation}\label{not-vanish-int}
0<\int_{|x| \le R}|\varphi(x)|^{p(N-1)}
\nu_{\vec{w}}(x)\, dx<\infty.
\end{equation}
On the other hand,
it follows from \eqref{not-vanish} that
\[
\varphi*\varphi(0)
=\calF^{-1}[\widehat{\varphi}\, \widehat{\varphi}](0)
=\frac{1}{(2\pi)^n}
\int_{\R^n}\widehat{\varphi}(\eta)^2\, d\eta
\neq 0.
\]
By the continuity of $\varphi*\varphi$ at the origin,
there exist $C>0$ and $\epsilon_0$ such that
\begin{equation}\label{estimate-phi-below}
|\varphi*\varphi(\epsilon x)|
\ge C
\qquad
\text{for all $0<\epsilon<\epsilon_0$ and $|x| \le R$}.
\end{equation}
Thus,
\begin{equation}\label{estimate-below-strong}
\|T_{m^{(\epsilon)}}(\vec{f})\|_{L^p(\nu_{\vec{w}})}
\ge C\epsilon^{n/p_1}
\left(\int_{|x| \le R}|\varphi(x)|^{p(N-1)}
\nu_{\vec{w}}(x)\, dx\right)^{1/p}
=C\epsilon^{n/p_1}
\end{equation}
for all $0<\epsilon<\epsilon_0$.
Hence,
by \eqref{assumption}, \eqref{estimate-m} and \eqref{estimate-f},
\begin{align*}
\epsilon^{n/p_1}
&\lesssim
\|T_{m^{(\epsilon)}}(\vec{f})\|_{L^p(\nu_{\vec{w}})}
\le
\|T_{m^{(\epsilon)}}\|_{L^{p_1}(w_1) \times \dots \times L^{p_N}(w_N) \to L^p(\nu_{\vec{w}})}
\prod_{k=1}^N\|f_k\|_{L^{p_k}(w_k)}
\\
&\lesssim
\sup_{j \in \Z}\|m_j^{(\epsilon)}\|_{W^{(s/N,\dots,s/N)}}
\|f_1^{(\epsilon)}\|_{L^{p_1}(w_1)}
\prod_{k=2}^N\|f_k\|_{L^{p_k}(w_k)}
\lesssim
\epsilon^{-s/N+n/2-\alpha_1/p_1}
\end{align*}
for all sufficiently small $\epsilon>0$.
However, since $n/p_1<-s/N+n/2-\alpha_1/p_1$
(see \eqref{alpha_1}),
this is a contradiction.
Therefore,
estimate \eqref{assumption} does not hold.
\end{proof}

We end this paper by giving the two remarks
mentioned in the end of the introduction.

\begin{rem}\label{remark-weak}
Let $N$, $s$ and $p_1,\dots,p_N$ satisfy
the assumption of Theorem \ref{main}.
Once inequality \eqref{estimate-below-strong}
is replaced by the sharper one
\begin{equation}\label{estimate-below-weak}
\|T_{m^{(\epsilon)}}(\vec{f})\|_{L^{p,\infty}(\nu_{\vec{w}})}
\gtrsim \epsilon^{n/p_1}
\qquad
\text{for all sufficiently small $\epsilon>0$},
\end{equation}
where $m^{(\epsilon)}$, $\vec{f}$ and $\vec{w}$ 
are the same as in the proof of Theorem \ref{main},
the same argument as before shows that the estimate
\[
\|T_m\|_{L^{p_1}(w_1) \times \dots \times L^{p_N}(w_N)
\to L^{p,\infty}(\nu_{\vec{w}})}
\lesssim
\sup_{j \in \Z}\|m_j\|_{W^{(s/N,\dots,s/N)}(\R^{Nn})}
\]
does not hold,
where the implicit constant is independent of $m$.

It is not difficult to prove \eqref{estimate-below-weak}.
Indeed, by \eqref{T_m-form} and \eqref{estimate-phi-below},
\[
\left\{x \in \R^n \,:\,
|T_{m^{(\epsilon)}}(\vec{f})(x)|
> \lambda \right\}
\supset
\left\{x \in B_R \,:\,
|\varphi(x)|^{N-1}
> (C\epsilon^{n/p_1})^{-1}\lambda \right\}
\]
for all $0<\epsilon<\epsilon_0$ and $\lambda>0$,
where $B_R$ is the ball with center at the origin and radius $R$.
Hence, since
$0<\sup_{\lambda>0}
\lambda
\left\{\nu_{\vec{w}}\left(\left\{x \in B_R \,:\,
|\varphi(x)|^{N-1} > \lambda \right\}\right)\right\}^{1/p}
<\infty$
(see \eqref{not-vanish-int}),
we obtain \eqref{estimate-below-weak}.
\end{rem}

\begin{rem}\label{remark-generalization}
Let $1 \leq q_k < p_k$, 
$k=1, \dots , N$, 
and set 
$\vec{P}=(p_1, \dots , p_N)$, 
$\vec{Q}=(q_1, \dots , q_N)$. 
We say that $\vec{w}=(w_1, \dots ,w_N)$
belongs to the class $A_{\vec{P}/\vec{Q}}$ if
\[
\sup_{Q}
\left( \frac{1}{|Q|} \int_{Q} \nu_{\vec{w}}(x) \,dx \right)^{1/p}
\prod_{k=1}^{N}
\left(
\frac{1}{|Q|}
\int_{Q} w_k^{1-(p_k/q_k)'} (x) \, dx
\right)^{1/q_k -1/p_k}
< \infty, 
\]
where $1/p_1+\dots+1/p_N=1/p$ and $\nu_{\vec{w}}=\prod_{k=1}^{N}w_k^{p/p_k}$.
Note that $\prod_{k=1}^{N} A_{p_k/q_k} \subset A_{\vec{P}/\vec{Q}}$,
and $A_{\vec{P}/\vec{Q}}=A_{(p_1/q_0, \dots, p_N/q_0)}$
if $q_1 = \dots = q_N = q_0 \geq 1$.

Let $N \geq 2$, 
$n/2 <s_k \leq n$,  
$n/s_k < p_k < \infty$ , 
$k=1, \dots, N$, 
$1/p_1 + \dots +1/p_N =1/p$,
and set $q_k=n/s_k$.
Jiao \cite{Jiao} proved that 
if $\vec{w} \in A_{\vec{P}/\vec{Q}}$,
then
\begin{equation*}
\|T_m \|
_{L^{p_1}(w_1) \times \dots \times L^{p_N}(w_N) \to L^p(\nu_{\vec{w}})}
\lesssim
\sup_{j \in \Z}
\| m_j \|
_{W^{s_1+ \dots +s_N} (\R^{Nn})}.
\end{equation*}
In the case $q_1= \dots =q_N =Nn/s$, 
this coincides with the results of \cite{Bui-Duong, Li-Sun}
(see \eqref{weight-BD}). 
However, 
in the same way as in the proof of Theorem \ref{main}, 
we can prove that the estimate 
\begin{equation*}
\|T_m \|
_{L^{p_1}(w_1) \times \dots \times L^{p_N}(w_N) \to L^p(\nu_{\vec{w}}) }
\lesssim
\sup_{j \in \Z}
\| m_j \|
_{W^{(s_1, \dots ,s_N)} (\R^{Nn})}
\end{equation*}
does not in general hold
for $\vec{w} \in A_{\vec{P}/\vec{Q}}$
(but this estimate holds 
for $\vec{w} \in \prod_{k=1}^{N} A_{p_k/q_k}$,
see \eqref{weight-FT}
and also \cite[Therem 6.2]{Fujita-Tomita} for general case). 
Indeed, as for \eqref{alpha_12} and \eqref{alpha_1}, 
we can choose
$\alpha_1$ and $\alpha_2$ so that 
$\alpha_1/p_1 +\alpha_2/p_2 >-n/p$, 
$\alpha_k /p_k < s_k -n/p_k$, 
$k=1, 2$, 
and
$\alpha_1/p_1<-n/p_1-s_1+n/2$. 
Then
$(|x|^{\alpha_1}, |x|^{\alpha_2}, 1, \dots, 1) \in A_{\vec{P}/\vec{Q}}$,
and the rest of the proof is similar. 
\end{rem}


\end{document}